\renewcommand {\a}{ \alpha }
\newcommand{\y}{\eta}
\newcommand{\m}{\mu}
\newcommand{\n}{\nu}
\newcommand{\g}{\gamma}
\newcommand{\G}{\Gamma}
\newcommand{\varf}{\varphi}
\renewcommand{\d}{\delta}
\newcommand{\D}{\Delta}
\renewcommand{\l}{\lambda}
\newcommand{\z}{\zeta}
\newcommand{\vart}{\vartheta}
\newcommand{\Om}{\Omega}
\newcommand{\R}{ \mathbb R}
\newcommand{\N}{ \mathbb N}
\newcommand{\Z}{ \mathbb Z}
\newcommand{\Sq}{ \mathbb S}
\newcommand{\CL}{\mathcal L}
\newcommand{\CB}{\mathcal B}
\newcommand{\CC}{\mathcal C}
\newcommand{\CD}{\mathcal D}
\newcommand{\CE}{\mathcal E}
\newcommand{\CH}{\mathcal H}
\newcommand{\CI}{\mathcal I}
\newcommand{\CX}{\mathcal X}
\newcommand{\CN}{\mathcal N}
\newcommand {\GA}{\mathfrak A}
\newcommand {\GB}{\mathfrak B}
\newcommand {\GH}{\mathfrak H}
\newcommand {\GJ}{\mathfrak J }
\newcommand {\GK}{\mathfrak K}
\newcommand {\ba}{\mathbf a}
\newcommand {\be}{\mathbf e}
\newcommand {\bm}{\mathbf m}
\newcommand {\bx}{\mathbf x}
\newcommand {\by}{\mathbf y}
\newcommand {\bb}{\mathbf b}
\newcommand {\BH}{\mathbf H}
\newcommand {\BL}{\mathbf L}
\newcommand {\BM}{\mathbf M}
\newcommand{\wt}{\widetilde}
\newcommand{\wh}{\widehat}
\DeclareMathOperator{\comp}{comp}
\newtheorem{thm}{Theorem}[section]
\newtheorem{prop}[thm]{Proposition}
\theoremstyle{definition}
\theoremstyle{remark}
\newtheorem{rem}[thm]{Remark}
\numberwithin{equation}{section}
\newcommand{\sh}{Schr\"odinger }
\newcommand{\vs}{\vskip0.2cm}
\newcommand{\pl}{\rm{pl}}
\newcommand{\ch}{\rm{ch}}
\newcommand{\fin}{\rm{fin}}
\newcommand{\w}{\infty}
\newcommand{\const}{\rm{const}}
\newcommand{\rad}{\rm{rad}}
\newcommand{\nrad}{\rm{nrad}}
\newcommand{\HH}[1]{\overset{\boldsymbol{\scriptscriptstyle{#1}}}{\BH}}
\newcommand{\VV}[1]{\overset{\boldsymbol{\scriptscriptstyle{#1}}}{V}}
\newcommand{\DD}[1]{\overset{\boldsymbol{\scriptscriptstyle{#1}}}{\D}}
\newcommand{\Sob}[1]{\overset{\boldsymbol{\scriptscriptstyle{#1}}} {\CH}}
\begin{document}
\title[Spectral estimates in dimension 2]
{On spectral estimates\\
 for the Schr\"odinger operators in global dimension 2}
\author[G. Rozenblum]{G. Rozenblum}
\address{Department of Mathematics \\
                        Chalmers University of Technology
                        and  The University of Gothenburg \\
                         S-412 96, Gothenburg, Sweden}
\email{grigori@chalmers.se}
\author[M. Solomyak]{M. Solomyak}
\address{Department of Mathematics
\\ Weizmann Institute of Science\\ Rehovot,  Israel}
\email{solom@wisdom.weizmann.ac.il}

\subjclass{47A75; 47B37, 34L15, 34L20}
\keywords{Eigenvalue  estimates, Schr\"odinger operator, Metric graphs, Local dimension.}
\dedicatory{TO BORIS MIKHAILOVICH MAKAROV, ON HIS 80-TH BIRTHDAY}

\begin{abstract}The problem of finding eigenvalue estimates for the Schr\"odinger operator turns out to be most complicated for the dimension $2.$ Some important results for this case have been obtained recently.  We discuss these results and establish their counterparts for the operator on the combinatorial and metric graphs corresponding to the lattice $\Z^2$.
\end{abstract}
\maketitle
\section{Introduction}\label{intro}For a self-adjoint operator $\BH$ in the Hilbert space $\GH$, whose negative spectrum is discrete,
we denote by $N_-(\BH)$ the dimension of its spectral projection that corresponds to the negative semi-axis $(-\w,0)$. In other words, $N_-(\BH)$ is the total number of all negative eigenvalues of
$\BH$, counted according to their multiplicities. For the \sh
operator $\BH=\BH_V=-\D-\a V$ (where $\a$ is a large parameter, {\it the coupling constant} and $V\ge0$) and its analogs the problem of obtaining estimates for   $N_-(\BH)$ has been attracting the interest of researchers for several last decades. In the case of the standard \sh operator in $\R^d$ with  $d\ge3$ the CLR estimate
 \begin{equation}\label{clr}
    N_-(-\D-\a V)\le C\a^{d/2}\int_{\R^d}V^{d/2}dx,\ d>2.
\end{equation}
 is sharp in order in $\a$ and in the function class for the potentials. It was obtained 40 years ago, and numerous generalizations had been found since then.  One of possible directions  for such generalizations concerns Schr\"odinger-like operators on structures that look globally as $\R^d$, but locally have a different dimension $\d$. The leading example here is the lattice $\Z^d$, which locally has dimension $\d=0$ but globally looks like $\R^d$. An exact explanation of the terms 'locally' and 'globally' in this context, as well as the corresponding results, can be found in \cite{RS}--\cite{RS11}, and in \cite{mv08}.

The case $d=2$ proves to be the most complicated and it is not completely understood up to now, both for the classical Schr\"odinger operator and its generalizations.

Recently,  several important results for operators in $\R^2$ were obtained. In the present paper we discuss some of these results and their counterparts for  operators with local dimension $0$ and $1$. To stress  close relations between three cases under consideration, we will denote the operators, functions etc. by the same symbols in all cases, marking the local dimension by the overset numeral, like in $\DD2$, or $\VV1$, when this is not clear from the context.  We concentrate ourselves on estimates having semiclassical order with respect to $\a$. In case of the global dimension $d=2$ this means that we are interested in estimates of the type
\begin{equation}\label{estim}
    N_-(\BH_{V})\le 1+\Phi(V)
\end{equation}
where the functional $\Phi(V)$ is homogeneous of order $1$ with respect to $V$, so that \eqref{estim}
automatically implies
\begin{equation}\label{estima}
    N_-(\BH_{\a V})\le 1+ \Phi(V)\a,\qquad\forall\a>0.
\end{equation}
Note that the term $1$ in \eqref{estim} and in \eqref{estima} reflects the well-known fact that
for all the cases considered the operator $\BH_{\a V}$ has at least one negative eigenvalue for any $\a>0$.

\section{The setting}\label{Setting}
We are interested in  estimates for $N_-(\BH)$ (in particular, in the conditions guaranteeing $N_-(\BH)<\w$) for three cases.

{\bf 1.}
$\GH=L^2(\R^2)$ and $\BH=-\DD2-\VV2$, i.e., it is the standard \sh operator  with a real-valued, non-positive potential $-\VV2$. Here $\d=2$.\vs

{\bf 2.} $\GH=\ell^2(\Z^2)$ and $\BH=-\DD0-\VV0$ is the discrete \sh operator with a
real-valued, non-positive potential $-\VV{0}$ defined on the lattice $\Z^2$. Here $\d=0$.\vs

{\bf 3.} The third case is intermediate between the previous two ones. Here we are dealing with the metric graph $\G_{\ch}$ that can be visualized as the union of the straight lines on $\R^2$, dividing the plain into the union of unit squares. We will call it \emph{"the chessboard mesh"}.
We give all the necessary detail about the graph $\G_{\ch}$ and about the corresponding \sh operator $\BH=\HH{1}_{\VV{1}}=-\DD{1}-\VV{1}$ in Section \ref{ael1}. Here we only note that for this graph $\d=1$. The operator $\HH{1}_{\VV{1}}$ acts as $-u''-\VV1u$ on each edge, and the functions from its domain meet some matching conditions at each vertex. The potential $\VV1\ge 0$ is a function, defined on the union of edges.\vs

The spectral nature of the Laplacians $\DD{2}$,  $\DD{0}$ and $\DD{1}$ is quite different. In particular, the operator $-\DD{0}$ is bounded in $\ell^2(\Z^2)$,  its spectrum is purely a.c. and fills $[0,4]$. The classical Laplacian $-\DD{2}$ is unbounded in $L^2(\R^2)$, its spectrum is purely a.c. and fills $[0,\w)$. The operator $-\DD{1}$ is unbounded as well. Its spectrum also fills $[0,\w)$, however, along with the a.c. component, it contains embedded eigenvalues at the points $\pi^2l^2$, $l=1,2,\dots$, and these eigenvalues have infinite multiplicity.
\begin{rem}
Alternatively to $N_-(\BH_V)$, one can look for estimates for the number of nonpositive eigenvalues, $N_{\le0}(\BH_V)$, like in \cite{mv08}, \cite{mv12}. Since there are no zero energy eigenfunctions of the unperturbed Laplacian, the quadratic form $-(Vu,u)$ is  negative definite on the null space of the operator $\BH_V$, and therefore the quadratic form $(\BH_Vu,u)-(Vu,u)=(\BH_{2V}u,u)$ is negative definite on the spectral subspace of $\BH_V$ corresponding to the nonpositive part of the spectrum (cf. the observation in \cite{mv08}). Thus,
$$N_{\le0}(\BH_V)\le N_-(\BH_{2V}).$$
Therefore, any estimate for the number of negative eigenvalues  carries over automatically  to a similar estimate for the number of nonpositive eigenvalues (with just a constant changed), so the former one is only formally weaker than the latter, provided we do not care for sharp constants.  Keeping this in mind, we, following the tradition, discuss estimates for $N_-(\BH_V)$ only.
\end{rem}

\section{The $\R^2$-case: Shargorodsky estimate}\label{SectSharg}

In spite of important differences, the estimates for the quantities $N_-(\HH{\d}_V)$ for $\d=0,1,2$ have much in common. For $d\ge3$ this was discovered in \cite{RS}-- \cite{RS10}. The situation in $\R^d,\ d>2,$ is governed by the
 CLR-inequality \eqref{clr}.
 A similar inequality is valid for the discrete \sh operator on $\Z^d$. It can be easily
derived from \eqref{clr}, see \cite{RS09}, but it also can be obtained independently \cite{levs, RS}. A CLR inequality for the $d$-dimensional analog of the chessboard mesh (its definition for an arbitrary $d\ge3$ is obvious)  has a similar form, with $V$ replaced by a certain effective discrete potential, see \cite{RS10}.

On the contrary, the case $d=2$ is not completely understood up to now. There are several upper estimates for $N_-(\HH{2})$ known. The estimate formulated below, the sharpest one known up to now, has been  obtained recently by  Shargorodsky \cite{Sh}, and is a refinement of earlier estimates in \cite{S94,LN,ls-nrad}. \vs

The estimate concerns the $\R^2$-case, and the overscript will be suppressed till the end of this section. The formulation of the estimate is rather complicated, since it makes use of function spaces that appear in the spectral theory not frequently. Let us present the necessary auxiliary material.

Below, $(r,\vart)$ stand for the polar coordinates in $\R^2$, and $\Sq$ denotes the unit circle $r=1$. Given a function $V$, such that $V(r,\cdot)\in L^1(\Sq)$ for almost all $r>0$, we introduce its radial and non-radial parts
\begin{gather*}
    V_{\rad}(r)=\frac1{2\pi}\int_{\Sq} V(r,\vart)d\vart;\qquad
    V_{\nrad}(r,\vart)=V(r,\vart)-V_{\rad}(r).
\end{gather*}

The conditions will be imposed on $V_{\rad}$ and on $V_{\nrad}$ separately.
For handling the radial part, we need a certain  auxiliary operator on the real line,
\begin{equation}\label{dim1}
    (\BM_{G}\varf)(t)=-\varf''(t)-G(t)\varf(t),\qquad \varf(0)=0,
\end{equation}
with the "effective potential"
\begin{equation}\label{effpot}
    G(t)=G_V(t)=e^{2|t|}V_{\rad}(e^t).
\end{equation}
Due to the condition $\varf(0)=0$ in \eqref{dim1}, the operator
$\BM_G$ is the direct orthogonal sum of two operators, each acting on the half-line. The sharp spectral estimates for $\BM_G$ can be given in terms of the number sequence (see Eq. (1.13) in \cite{ls-nrad})
\begin{equation}\label{sum}
   \wh{ \boldsymbol\z}(G)=\{\wh{\z_j}(G)\}_{j\ge0}:\qquad
     \wh{\z_0}(G)=\int_{D_0}G(t)dt, \quad
   \wh{\z_j}(G)=\int_{|t|\in D_j}|t|G(t)dt\quad  (j\in\N)
   \end{equation}
where $D_0=(-1,1)$ and $D_j=(e^{j-1},e^j)$ for $j\in\N$. The estimate is
\begin{equation}\label{est1}
    N_-(\BM_G)\le 1+ C\sup_{s>0}\left(s\#\{j:\wh{\z_j}G)>s\}\right),
\end{equation}
see Theorem 5.4, {\bf 2} in \cite{S12}.

Note that the functional appearing in the right-hand side of \eqref{est1} is nothing but the
quasi-norm of the sequence $\{\wh{\z_j}(G)\}$ in the "weak $\ell_1$-space" $\ell_{1,\w}$.

The condition on $V_{\nrad}$ is given in terms of the space $L_1\left(\R_+,\, L_{\GB}(\Sq)\right)$, i.e., the space of functions on the half-line with values in the space $L_{\GB}(\Sq)$. The latter is  the Orlicz space of functions on the unit circle, defined
by the $\CN$-function
 \begin{equation}\label{orlB}
    \GB(t)=(1+t)\ln(1+t)-t.
 \end{equation}
 See \cite{KR}, or \cite{RR} for the basics in Orlicz spaces, and in particular, for the definition of the norm in them. Some additional details are presented in Section \ref{weiest} below. There we need also the $\CN$-function
 \begin{equation}\label{orlA}
    \GA(t)=e^t-t-1,
 \end{equation}
 complementary to $\GB(t)$. In what follows, for brevity, we write $\GB(\Sq)=L_{\GB}(\Sq)$.\vs

We will suppose that $V_{\nrad}\in\CX= L_1\left(\R_+,\, \GB(\Sq)\right)$. The latter space is defined by the norm
\[ \|f\|_\CX=\int_{\R_+}\|f(r,\cdot)\|_{\GB(\Sq)}rdr.\]

The resulting estimate, see \cite{Sh}, is given by
\begin{thm}\label{Sharg}
Let $V\ge0$ be a potential on $\R^2$ and let $G=G_V$ be the corresponding effective potential as in \eqref{effpot}. Suppose that the sequence $\{\wh{\z_j}(G)\}$ belongs to
the space $\ell_{1,\w}$ and that $V_{\nrad}\in L_1\left(\R_+,\, \GB(\Sq)\right)$.
Then
\begin{equation}\label{est2}
    N_-(\BH_V)\le 1+C\left(\sup_{s>0}\left(s\#\{j:\z_j(G_V)>s\}\right)+\|V_{\nrad}\|_{L_1\left(\R_+,\ L\log L(\Sq)\right)}\right).
\end{equation}
\end{thm}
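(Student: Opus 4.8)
The plan is to estimate $N_-(\BH_V)$ through the variational (Glazman) principle, bounding it by the maximal dimension of a subspace on which the quadratic form $\mathfrak{h}_V[u]=\int_{\R^2}|\nabla u|^2\,dx-\int_{\R^2}V|u|^2\,dx$ is negative, and to split this form into a radial and a non-radial piece, each carrying one half of the kinetic energy,
\[
\mathfrak{h}_V[u]=\Bigl(\tfrac12\|\nabla u\|^2-\int V_{\rad}|u|^2\Bigr)+\Bigl(\tfrac12\|\nabla u\|^2-\int V_{\nrad}|u|^2\Bigr)=:\mathfrak{a}[u]+\mathfrak{b}[u].
\]
By the elementary inequality $N_-(\mathfrak{a}+\mathfrak{b})\le N_-(\mathfrak{a})+N_-(\mathfrak{b})$ it then suffices to treat the two forms separately. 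The splitting $V=V_{\rad}+V_{\nrad}$ is chosen so that, after expanding $u$ into angular harmonics on each circle, the radial potential $V_{\rad}$ decouples the harmonics, while the vanishing angular mean of $V_{\nrad}$ kills its pairing with the radial component of $u$; this is exactly what separates the two mechanisms behind the two terms in \eqref{est2}.

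First I would treat the radial form $\mathfrak{a}$. Expanding $u$ in angular harmonics, the $n=0$ sector is reduced to a one-dimensional problem by the logarithmic substitution $r=e^t$ (which turns $\tfrac12\|\nabla u_0\|^2$ into $c\int|v'|^2\,dt$ and $\int V_{\rad}|u_0|^2\,r\,dr$ into $c\int G|v|^2\,dt$ with $G$ as in \eqref{effpot}); a Kelvin reflection $r\mapsto1/r$ identifies the regions $r<1$ and $r>1$, which accounts both for the weight $e^{2|t|}$ and for the splitting into two half-lines encoded by the condition $\varf(0)=0$ in \eqref{dim1}. The resulting operator is $\BM_{cG}$, and the cited estimate \eqref{est1} supplies the first term and the single ``$+1$'' on the right of \eqref{est2}. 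The higher harmonics $n\ne0$ are subcritical thanks to the centrifugal barrier $n^2/r^2\ge 1/r^2$; their contribution is absorbed by the weight $|t|$ in the definition \eqref{sum} and is already covered by \eqref{est1}.

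The heart of the argument is the non-radial form $\mathfrak{b}$. On the subspace of functions with zero angular mean, Poincar\'e's inequality on the circle ($-\p_\vart^2\ge1$ on the orthogonal complement of the constants) upgrades the angular part of the Dirichlet energy to the genuine Hardy inequality $\int_{\R^2}|\nabla u|^2\ge\int_{\R^2}|u|^2/|x|^2$, thereby removing the zero-energy resonance that makes the $\R^2$ problem critical. Writing $\int_{\Sq}V_{\nrad}(r,\cdot)|u|^2\,d\vart=\int_{\Sq}V_{\nrad}(r,\cdot)\bigl(|u|^2-\overline{|u|^2}\bigr)\,d\vart$, where $\overline{|u|^2}$ is the angular mean, and invoking the duality between $\GB(\Sq)$ and its complementary Orlicz space $L_{\GA}(\Sq)$ together with the borderline (Moser--Trudinger type) embedding into the exponential class $L_{\GA}(\Sq)$, one bounds the angular pairing at each radius by $\|V_{\nrad}(r,\cdot)\|_{\GB(\Sq)}$ times a factor controlled by the derivatives of $u$. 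Integrating in $r$ against the weight $r\,dr$ reconstitutes $\|V_{\nrad}\|_{L_1(\R_+,L\log L(\Sq))}$, and the Hardy inequality places the corresponding Birman--Schwinger operator in the weak trace ideal, yielding $N_-(\mathfrak{b})\le C\|V_{\nrad}\|_{L_1(\R_+,L\log L(\Sq))}$ with no additional ``$+1$''.

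I expect the main obstacle to be this non-radial estimate: matching the angular Orlicz duality with exactly the homogeneity for which the radial weight assembles the stated norm, and establishing the borderline embedding into $L_{\GA}(\Sq)$ uniformly in $r$, while controlling the cross term between the radial and oscillating parts of $u$ produced by $V_{\nrad}$. A secondary delicate point is the bookkeeping of the constant ``$+1$'': it must originate solely from the resonant radial $n=0$ sector via \eqref{est1}, so one must verify that neither the non-radial sector nor the higher radial harmonics introduce an additive constant of their own.
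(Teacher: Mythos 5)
The first thing to note is that the paper contains no proof of Theorem \ref{Sharg}: it is quoted as a known result from Shargorodsky \cite{Sh}. Your attempt therefore has to stand on its own, and it does not, because its central claim is false. The fatal step is the assertion that the non-radial form $\mathfrak{b}[u]=\tfrac12\|\nabla u\|^2-\int_{\R^2}V_{\nrad}|u|^2dx$ satisfies $N_-(\mathfrak{b})\le C\|V_{\nrad}\|_{L_1(\R_+,\,\GB(\Sq))}$ \emph{with no additive constant}. For the subadditivity $N_-(\mathfrak{a}+\mathfrak{b})\le N_-(\mathfrak{a})+N_-(\mathfrak{b})$ you must count $N_-(\mathfrak{b})$ over \emph{all} of $H^1(\R^2)$, not over the zero-angular-mean subspace where your Hardy--Orlicz argument lives, and on the full space $\mathfrak{b}$ has a negative direction as soon as $V_{\nrad}\not\equiv0$, however small its norm. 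Indeed, take a radial logarithmic cut-off $\varphi_R$ ($\varphi_R=1$ for $|x|\le R$, $\varphi_R=0$ for $|x|\ge R^2$, logarithmic in between), so that $\|\nabla\varphi_R\|^2=O(1/\log R)$ while $\int V_{\nrad}\varphi_R^2dx=0$ because $\varphi_R^2$ is radial and $V_{\nrad}$ has zero angular mean; choose a real $w\in C_0^\w(\R^2\setminus\{0\})$ with $\int V_{\nrad}w\,dx>0$. For large $R$ the supports of $\nabla\varphi_R$ and of $w$ are disjoint and $\varphi_R\equiv1$ on $\mathrm{supp}\,w$, so $\mathfrak{b}[\varphi_R+\epsilon w]=O(1/\log R)-2\epsilon\int V_{\nrad}w\,dx+\epsilon^2\mathfrak{b}[w]$, which is negative for suitable $\epsilon>0$ and $R$ large. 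Hence $N_-(\mathfrak{b})\ge1$ always (the two-dimensional weak-coupling phenomenon), your claimed bound cannot hold, and your scheme can produce at best $2+C(\cdots)$ instead of \eqref{est2}. The mechanism behind this counterexample is exactly the cross term $2\re\int V_{\nrad}u_0\bar u_\perp\,dx$ that you postpone as a ``secondary delicate point'': it is the crux, not a technicality. The known proofs avoid the trap differently: the crude decoupling $|u|^2\le2|u_0|^2+2|u_\perp|^2$ separates the sectors (as in \cite{ls-nrad}) at the price of the full $V$, rather than $V_{\nrad}$, entering the Orlicz term --- on the zero-mean subspace the Hardy weight removes the resonance and no extra constant arises --- while getting $V_{\nrad}$ in \eqref{est2} requires the genuinely coupled treatment of the cross term carried out in \cite{Sh}. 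Splitting the kinetic energy into two halves facing the two pieces of the potential as independent forms cannot give the stated result, since each half inherits the full logarithmic criticality of $\R^2$.

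There is a second, lesser gap: the claim that the harmonics $n\ne0$ of the radial form are ``already covered by \eqref{est1}'' is unfounded. The estimate \eqref{est1} concerns only the operator \eqref{dim1} with the condition $\varphi(0)=0$, i.e.\ the $n=0$ channel; the channels $n\ne0$ are whole-line operators $-d^2/dt^2+n^2-\wt G$ with $\wt G(t)=e^{2t}V_{\rad}(e^t)\le G(t)$, and their total count needs its own argument. One does exist: denoting by $\lambda_k$ the negative eigenvalues of $-d^2/dt^2-\wt G$ on $L^2(\R)$, one has $\sum_{n\ne0}N_-(-d^2/dt^2+n^2-\wt G)=\sum_k\#\{n\ne0:\,n^2<|\lambda_k|\}\le2\sum_k|\lambda_k|^{1/2}\le C\int_\R\wt G\,dt$ by the one-dimensional Lieb--Thirring inequality with exponent $1/2$, and this is admissible because $\int_\R G\,dt\le C\|\wh{\boldsymbol\z}(G)\|_{\ell_{1,\w}}$ (the block $e^{j-1}<|t|<e^j$ contributes at most $e^{-(j-1)}\wh{\z_j}$ by \eqref{sum}, and each $\wh{\z_j}$ is dominated by the weak-$\ell_1$ quasinorm). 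But this is an argument you must supply; it is not a consequence of \eqref{est1}, and without it even the radial half of your decomposition is incomplete.
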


Still, this theorem gives only a sufficient condition for the semi-classical behavior $N_-(\BH_{\a V})=O(\a)$ in the large coupling constant regime. A number of  estimates, non-linear in $\a$, are also known, see \cite{mv12}.

For the radial potentials $V(x)=F(|x|)$ the estimate \eqref{est2} simplifies since the second term
in brackets disappears. It was established in \cite{ls-rad} that for such potentials
 this estimate gives not only sufficient, but also necessary condition for the semi-classical behavior. For arbitrary (i.e., not necessarily radial) potentials the following important result of a "negative" nature
 was recently established in \cite{gn}, Section 2.7.
 \begin{thm}\label{noest}
For the operator $\BH_V$ on $\R^2$ no estimate of the type
\begin{equation}\label{weight-est}
    N_-(\BH_V)\le \const+\int_{\R^2}VWdx
\end{equation}
can hold, provided the weight function $W$ is bounded in a neighborhood of at least one point.
\end{thm}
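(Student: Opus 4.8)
The plan is to disprove \eqref{weight-est} by exhibiting, for any admissible weight $W$ and any constant, a sequence of potentials $V_n\ge0$ for which $N_-(\BH_{V_n})\to\w$ while the right-hand side stays bounded. By hypothesis there is a point near which $W$ is bounded; after a translation I may assume this is the origin, so that $W\le M$ on some disk $B_\rho=\{|x|<\rho\}$. If every $V_n$ is supported in $B_\rho$, then $\int_{\R^2}V_nW\,dx\le M\int_{\R^2}V_n\,dx$, and it suffices to construct $V_n$ supported in $B_\rho$ with $\int_{\R^2}V_n\,dx$ bounded uniformly in $n$ but $N_-(\BH_{V_n})\ge n$. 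The mechanism that makes this possible is the conformal (scale) invariance of the Dirichlet integral in dimension $2$: the $L^1$-norm of the potential does not control the number of bound states.

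The basic building block is a logarithmic "tent". Fix $L>1$ large and let $\phi$ be the radial function equal to $1$ at $|x|=1$, decaying like $1\mp\ln|x|/\ln L$ to $0$ at $|x|=L^{\pm1}$, and vanishing outside the annulus $L^{-1}<|x|<L$. A direct computation gives $\int_{\R^2}|\nabla\phi|^2\,dx=4\pi/\ln L$, which is small for large $L$, while $\phi\ge\tfrac12$ on the fixed annulus $\tfrac12<|x|<2$. First I would place a constant potential of height $c$ on this middle annulus; then $\int V|\phi|^2\,dx\ge c\,c_0$ and $\int V\,dx=c\,c_1$ with fixed constants $c_0,c_1$, so choosing $c\sim(\ln L)^{-1}$ makes the quadratic form $\int|\nabla\phi|^2-\int V|\phi|^2$ negative, at the cost $\int V\,dx=O(1/\ln L)$.

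Next I would pack $n$ rescaled, disjointly supported copies near the origin. Setting $\phi_k(x)=\phi(x/s_k)$ with $s_k=\rho L^{-2k}$ ($k=1,\dots,n$), the supports $\{L^{-1}<|x|/s_k<L\}$ are pairwise disjoint and contained in $B_\rho$. By scale invariance $\int|\nabla\phi_k|^2\,dx=4\pi/\ln L$ for every $k$, and placing the scaled potential (height $c_k\sim s_k^{-2}/\ln L$) on the annulus $\tfrac12 s_k<|x|<2s_k$ makes the form negative on each $\phi_k$ while contributing, again by scaling, exactly $O(1/\ln L)$ to $\int V\,dx$, \emph{independently of the scale} $s_k$. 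Let $V_n$ be the sum of these $n$ potentials. Since the $\phi_k$ have disjoint supports they are orthogonal and the off-diagonal terms in both the Dirichlet form and the potential term vanish, so the quadratic form of $\BH_{V_n}$ is negative on the whole $n$-dimensional span $\langle\phi_1,\dots,\phi_n\rangle$; by the variational principle $N_-(\BH_{V_n})\ge n$. Meanwhile $\int_{\R^2}V_n\,dx=\sum_{k=1}^n O(1/\ln L)=O(n/\ln L)$.

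The decisive step, and the only place where the two parameters must be balanced, is the choice $L=e^n$: then $\int_{\R^2}V_n\,dx=O(1)$ is bounded uniformly in $n$, whereas $N_-(\BH_{V_n})\ge n\to\w$. Consequently $\const+\int_{\R^2}V_nW\,dx\le\const+M\int_{\R^2}V_n\,dx$ stays bounded while $N_-(\BH_{V_n})$ diverges, so \eqref{weight-est} fails for all large $n$. I expect the main obstacle to be purely bookkeeping: verifying that the per-bump $L^1$-cost is genuinely scale-independent (so that $n$ bumps cost $O(n/\ln L)$ rather than growing with the scales), which is exactly the two-dimensional conformal invariance, and checking that the logarithmic tents fit inside the single disk $B_\rho$ on which $W$ is controlled.
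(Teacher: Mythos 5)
Your proposal is correct. Note, however, that the paper contains no proof of Theorem \ref{noest} at all: it is quoted as a result of Grigoryan and Nadirashvili (\cite{gn}, Section 2.7), so there is no internal argument to compare against. Your construction is a complete, self-contained proof, and it rests on exactly the mechanism behind the cited result: in dimension $2$ both the Dirichlet integral and the quantity $\int V\,dx$ (with $V$ scaling like $s^{-2}$) are scale invariant, so a bound state can be produced at an arbitrarily small scale for an $L^1$-cost of only $O(1/\ln L)$, and $n$ such bumps can be packed on exponentially separated annuli inside the single disk where $W\le M$. The bookkeeping you flag as the main risk does check out: each logarithmic tent costs $4\pi/\ln L$ in Dirichlet energy, each potential bump costs $O(1/\ln L)$ in $\int V\,dx$ independently of the scale $s_k$, the supports of the $\phi_k$ and of the bumps are pairwise disjoint for $L>2$ (so the quadratic form of $\BH_{V_n}$ is diagonal and negative definite on the $n$-dimensional span, giving $N_-(\BH_{V_n})\ge n$ by the variational principle), and the balance $L=e^n$ keeps $\int_{\R^2}V_n\,dx=O(1)$, hence $\const+\int_{\R^2}V_nW\,dx\le\const+M\int_{\R^2}V_n\,dx=O(1)$ while $N_-(\BH_{V_n})\to\infty$. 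The only trivial caveats worth recording are that one needs $L\ge4$ so that $\phi\ge\tfrac12$ on the middle annulus (automatic for $L=e^n$, $n\ge2$), and that the translation reducing the statement to a neighborhood of the origin is harmless because $N_-$ is translation invariant.
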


\section{The $\Z^2$-case: reduction to the $\R^2$-case}\label{Sect2to0}
In this Section we present the machinery for transferring spectral estimates from the operator $\HH{2}$ to $\HH{0}$. This simple approach has been already used in \cite{RS09} for the study of the \sh operator on $\Z^d, \  d\ge3$, however certain modifications are needed in the two-dimensional case.
\subsection{Interpolation. Sobolev space $\Sob0$.}\label{Sob0}
Consider the semi-norm on the space of functions on $\Z^2$
\begin{equation}\label{Alt1}
    \|u\|^2_{\Sob0}=
    \sum_{\bx\in\Z^2}\left(|u(\bx+e_1)-u(\bx)|^2+|u(\bx+e_2)-u(\bx)|^2\right),
\end{equation}
where $e_1=(1,0), \ e_2=(0,1)$.
It is an analog of the Dirichlet integral on $\R^2$.

  With any function $u(\bx), \bx\in \Z^2$  we associate the function $U_0(x)=(\GJ_0 u)(x)$,  $x\in\R^2$,
in the following way: at first, in each square with the vertices
 $\bx,\bx+e_1,\bx+e_2,\bx+e_1+e_2;\ \bx\in\Z^2,$ we take the function $U_0$ that is bi-linear, that is, linear in $x_1$ and
 in $x_2$ (separately), and coincides with $u$ at the vertices. Such function is unique. The following simple fact has been used in \cite{RS09}:

\begin{prop}\label{prop.interp} For any finitely supported function $u$ on $\Z^2$ the function  $U_0=\GJ_0 u$ belongs to the usual Sobolev space $H^1(\R^2)$ and
\begin{equation}\label{interp}
    \int_{\R^2}|\nabla U_0|dx\le C\|u\|^2_{\Sob0},\qquad U_0=\GJ_0 u.
\end{equation}
\end{prop}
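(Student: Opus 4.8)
The plan is to prove the inequality square-by-square. I first note that, on dimensional grounds, the left-hand side must carry the exponent $2$: under the scaling $u\mapsto\l u$ the right-hand side $\|u\|_{\Sob0}^2$ is multiplied by $\l^2$, so the quantity on the left has to be quadratic in $u$ as well, i.e. the inequality to be established is the Dirichlet-energy comparison
\begin{equation*}
\int_{\R^2}|\nabla U_0|^2\,dx\le C\|u\|_{\Sob0}^2,\qquad U_0=\GJ_0u.
\end{equation*}
Since both sides are sums over the unit squares of the mesh and the construction of $U_0$ is translation-invariant, it suffices to estimate the contribution of a single square $Q$ with vertices $\bx,\bx+e_1,\bx+e_2,\bx+e_1+e_2$. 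Writing $a=u(\bx)$, $b=u(\bx+e_1)$, $c=u(\bx+e_2)$, $d=u(\bx+e_1+e_2)$ and using local coordinates $(s,t)\in[0,1]^2$ aligned with $e_1,e_2$, the unique bilinear interpolant is $U_0=a(1-s)(1-t)+b\,s(1-t)+c(1-s)t+d\,st$, so that
\begin{equation*}
\p_sU_0=(b-a)(1-t)+(d-c)t,\qquad \p_tU_0=(c-a)(1-s)+(d-b)s.
\end{equation*}

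The crucial observation is that each partial derivative is an affine interpolation, in the transverse variable, between two consecutive edge-differences of $u$: $\p_sU_0$ interpolates the horizontal differences $b-a$ and $d-c$ across the bottom and top edges of $Q$, while $\p_tU_0$ interpolates the vertical differences $c-a$ and $d-b$ across the left and right edges. Using the elementary bound $\int_0^1|(1-t)p+tq|^2\,dt=\tfrac13(|p|^2+\re(p\bar q)+|q|^2)\le|p|^2+|q|^2$ and integrating over $Q$ (the integrand being constant in the longitudinal variable) gives $\int_Q|\nabla U_0|^2\,dx\le C\bigl(|b-a|^2+|d-c|^2+|c-a|^2+|d-b|^2\bigr)$, i.e. the energy on $Q$ is controlled by the squared differences of $u$ along the four edges of $Q$. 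Summing over all squares, each horizontal edge-difference $|u(\bx'+e_1)-u(\bx')|^2$ occurs as a bottom edge for the square above it and a top edge for the square below it, hence in at most two squares, and likewise for the vertical differences; this bounded overlap yields $\int_{\R^2}|\nabla U_0|^2\,dx\le 2C\|u\|_{\Sob0}^2$, which is the required estimate.

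It remains to confirm that $U_0\in H^1(\R^2)$ rather than merely possessing a square-integrable classical gradient on each square. Since $u$ is finitely supported, $U_0$ is compactly supported, so the only point needing care — and the one I regard as the genuine (if mild) obstacle — is to rule out a singular contribution to the distributional gradient along the grid lines. This is settled by checking that $U_0$ is globally continuous: on a shared edge, say the vertical segment separating two horizontally adjacent squares, the two bilinear pieces each restrict to the same function, namely the linear interpolation in $t$ between the two common vertex values, and analogously on horizontal edges. Continuity across every edge guarantees that the distributional gradient of $U_0$ has no jump (measure) part and coincides with the piecewise-classical gradient computed above; combined with the energy bound and the compact support, this places $U_0$ in $H^1(\R^2)$ and completes the argument.
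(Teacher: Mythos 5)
Your proof is correct, and you were right to read the left-hand side of \eqref{interp} as $\int_{\R^2}|\nabla U_0|^2\,dx$: the statement as printed is a misprint (with the first power the two sides scale differently under $u\mapsto\lambda u$, so no uniform constant could exist), and the paper itself works with the squared gradient both in its proof and in the subsequent inequality \eqref{interp1}. Your route differs from the paper's in how the single-cell estimate is obtained. The paper argues softly: on the four-dimensional space $\CL(\CC)$ of bilinear functions on the unit cell $\CC=[0,1]^2$ it introduces the two quadratic forms $\wt Q[U_0;\CC]$ (sum of squared differences over adjacent vertices) and $\wt D[U_0;\CC]$ (the Dirichlet integral over the cell), observes that both vanish exactly on the constants, and concludes by finite-dimensionality that they are equivalent, $c\wt Q\le \wt D\le c'\wt Q$; summing the upper bound over all cells gives \eqref{interp}. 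You instead write out the bilinear interpolant explicitly, note that each partial derivative is an affine interpolation of two edge differences, and bound the cell energy by an elementary integral computation. The two approaches buy different things: the paper's argument requires no computation, gives the two-sided equivalence (only one side of which is needed here), and extends verbatim to any finite-dimensional local interpolation scheme whose energy form and difference form share the same null space; your computation produces explicit constants (per-cell constant $1$, global constant $2$, since each lattice edge lies in exactly two cells) and makes the bounded-overlap bookkeeping in the summation transparent. Your handling of the $H^1$ membership --- global continuity across the grid lines rules out a singular part of the distributional gradient, so the weak gradient is the piecewise-classical one --- is exactly the point the paper disposes of in one sentence, spelled out in welcome detail.
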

We borrow the proof from \cite{RS09}.
\begin{proof} Consider the space $\CL(\CC)$ of bi-linear functions on  the
unit cell $\CC=[0,1]^2$. Clearly, $\dim\CL(\CC)=4$. On
$\CL(\CC)$ we consider the quadratic forms
\begin{equation*}
\wt Q[U_0;\CC]=\sum_{\bx,\by\in\{0,1\}^2\atop x\sim
y}|U_0(x)-U_0(y)|^2;\qquad \wt D[U_0;\CC]=\int_\CC|\nabla U(\xi)|^2d\xi.
\end{equation*}
These two quadratic forms vanish on the same subspace in
$\CL(\CC)$, the one consisting of constant functions. Therefore, they are
equivalent, i.e., with some $c,c'>0$ we have
\begin{equation}\label{cell}
    c\wt Q[U_0;\CC]\le\wt D[U_0;\CC]\le c'\wt Q[U_0;\CC].
\end{equation}
The function $U_0$ is compactly supported, continuous on the whole of $\R^2$, and smooth
inside each cell. Hence, $U_0\in  H^1(\R^2)$.
By adding up  inequalities of the form \eqref{cell} for all  cells $\CC+\bx,\
\bx\in\Z^2$, we arrive at \eqref{interp}.\end{proof}

Now we fix a smooth cut-off function $\psi(x),\ x\in \R^2,$ which vanishes in the neighborhood of $x=(0,0)$ and equals 1 for $|x|>\frac12$. We define the interpolation operator $\GJ$ by setting $U=\psi U_0=\psi\GJ_0 u$

The function $U$ differs from the corresponding function $U_0$ on four squares only, therefore it follows from \eqref{interp} that for all finitely supported functions $u$ on $\Z^2$, such that $u(0,0)=0$, the inequality similar to \eqref{interp} holds, with $U$ replacing $U_0$:

\begin{equation}\label{interp1}
    \int_{\R^2}|\nabla U|^2dx\le C\|u\|^2_{\Sob0},\qquad U=\GJ u.
\end{equation}

The next step is to obtain a weighted estimate for functions on the lattice.
\begin{prop}\label{DiscrHardy}For all functions $u(x), \ x\in \Z^2$ with finite support, satisfying $u(0,0)=0,$ the following discrete Hardy inequality holds:
\begin{equation}\label{HardyFin}
    \|u\|^2_{\Sob0}\ge C \sum_{\bx\in\Z^2, \bx\ne(0,0)}|u(\bx)|^2|\bx|^{-2}(\log (|\bx|+2))^{-2}.
\end{equation}
\end{prop}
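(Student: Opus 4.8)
The plan is to transplant the problem to the continuum, apply a two–dimensional Hardy inequality with a logarithmic weight there, and then read the result back on the lattice. Let $U=\GJ u=\psi U_0$ be the interpolant from Subsection~\ref{Sob0}. Inequality \eqref{interp1} already controls the Dirichlet integral of $U$ by the discrete semi-norm,
\begin{equation*}
\int_{\R^2}|\nabla U|^2\,dx\le C\|u\|^2_{\Sob0},
\end{equation*}
so it suffices to bound $\int_{\R^2}|\nabla U|^2\,dx$ from below by the weighted continuum integral
\[
\int_{\R^2}\frac{|U(x)|^2}{|x|^2(\log(|x|+2))^2}\,dx ,
\]
and then to bound this integral from below by the right-hand side of \eqref{HardyFin}. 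A key structural point is that $U$ vanishes in a neighbourhood of the origin, since the cut-off $\psi$ does; this is exactly what makes the weight, which is singular at $x=0$, harmless both here and in the discrete sum (where $\bx=(0,0)$ is excluded).

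For the continuum Hardy inequality I would pass to polar coordinates $(r,\vart)$ and discard the angular derivative,
\[
\int_{\R^2}|\nabla U|^2\,dx\ge\int_0^{2\pi}\!\!\int_0^\w |\partial_r U(r,\vart)|^2\,r\,dr\,d\vart ,
\]
and treat each radial slice by the substitution $r=e^s$. This turns $\int_0^\w|\partial_r f|^2 r\,dr$ into $\int_\R|g'(s)|^2\,ds$ with $g(s)=f(e^s)$, while the weighted integral becomes $\int_\R |g(s)|^2(\log(e^s+2))^{-2}\,ds$, whose density behaves like $s^{-2}$ as $s\to+\w$. Since $U$ is supported in $\{|x|\ge\delta_0\}$ for some $\delta_0>0$ (again because of $\psi$), each $g$ is supported in $s\ge s_0=\log\delta_0$ and, being $H^1$, vanishes at $s_0$; the one–dimensional Hardy inequality on the half-line then gives $\int_\R|g'|^2\,ds\ge c\int_\R |g|^2(\log(e^s+2))^{-2}\,ds$ with a constant $c$ independent of the slice (the ratio $(s-s_0)^2/(\log(e^s+2))^2$ is bounded on $(s_0,\w)$, uniformly, because $s_0$ is radial). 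Integrating over $\vart$ yields the desired continuum inequality.

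It remains to compare the continuum integral with the discrete sum. On the unit cell $\CC$ the two quadratic forms $U\mapsto\int_\CC|U|^2\,d\xi$ and $U\mapsto\sum_{v}|U(v)|^2$ (the sum over the four vertices) are both squared norms on the $4$-dimensional space $\CL(\CC)$ of bilinear functions, hence equivalent, exactly as in the proof of Proposition~\ref{prop.interp}; in particular $\int_\CC|U|^2\,d\xi\ge c\,|U(v_0)|^2$ for any fixed vertex $v_0$. Translating this to $\CC+\bx$, choosing $v_0=\bx$, and using that the weight $w(x)=|x|^{-2}(\log(|x|+2))^{-2}$ is radially decreasing and varies by a bounded factor over a unit cell (so $\inf_{\CC+\bx}w\ge c\,w(\bx)$ uniformly in $\bx$, since $||x|-|\bx||\le\sqrt2$ on the cell), I obtain
\[
\int_{\CC+\bx}\frac{|U|^2}{|x|^2(\log(|x|+2))^2}\,dx\ge c\,w(\bx)\,|U(\bx)|^2 .
\]
Finally $U(\bx)=u(\bx)$ for every $\bx\ne(0,0)$, because $\psi\equiv1$ on $\{|x|>\tfrac12\}$ and every nonzero lattice point has $|\bx|\ge1$. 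Summing over $\bx\in\Z^2$ (each lattice point being the distinguished corner of exactly one cell) assembles the full sum in \eqref{HardyFin}.

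The main obstacle is the continuum logarithmic Hardy inequality itself: the plain two–dimensional estimate $\int|\nabla U|^2\ge c\int|U|^2|x|^{-2}$ fails (its best constant is $0$), so the logarithmic factor is indispensable, and one must check that the one–dimensional reduction produces a constant uniform in the angular variable and that the support condition near the origin is genuinely used. The cell-wise comparison is then routine once the norm equivalence of Proposition~\ref{prop.interp} and the slow variation of the weight are in hand.
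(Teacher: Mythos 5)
Your route is the same as the paper's: interpolate by $\GJ$, control $\int_{\R^2}|\nabla U|^2\,dx$ by $\|u\|^2_{\Sob0}$ via \eqref{interp1}, invoke the logarithmic Hardy inequality on $\R^2$ for functions vanishing near the origin, and then descend from the weighted integral to the lattice sum using the bilinear structure of $U$ on cells. The paper states the last two steps without proof, and the details you supply for them are sound: the polar-coordinate reduction (discarding the angular derivative, substituting $r=e^s$, applying one-dimensional Hardy on $(s_0,\infty)$ with a constant uniform in $\vart$ because the support radius $\delta_0$ is determined by the fixed cut-off $\psi$) is a correct proof of the continuum inequality, and the comparison $\inf_{\CC+\bx}\,|x|^{-2}(\log(|x|+2))^{-2}\ge c\,|\bx|^{-2}(\log(|\bx|+2))^{-2}$ is valid uniformly precisely because $|\bx|\ge1$ for $\bx\ne(0,0)$.

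There is, however, one step that fails as written. You apply the norm equivalence on the four-dimensional space $\CL(\CC)$ to \emph{every} cell $\CC+\bx$ with $\bx\ne(0,0)$, but $U=\psi U_0$ is not bilinear on the cells meeting the set where $\psi\ne1$: with your convention these are the three cells $\CC+\bx$, $\bx\in\{(-1,0),(0,-1),(-1,-1)\}$ (the fourth central cell $\CC+(0,0)$ is harmless, since $(0,0)$ is excluded from the sum). This is exactly the point the paper flags with the parenthetical ``with exception of 4 central squares''. The repair is routine with tools you already use. Either (a) for these three lattice points replace $\CC+\bx$ by an adjacent cell disjoint from the disk $\{|x|\le\frac12\}$ (for instance $[-2,-1]\times[0,1]$ for $\bx=(-1,0)$), on which $U=U_0$ is genuinely bilinear; the resulting family of cells covers $\R^2$ with multiplicity at most $4$ instead of $1$, which only changes the constant in \eqref{HardyFin}. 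Or (b) keep $\CC+\bx$ and note that $U=U_0$ on $(\CC+\bx)\cap\{|x|>\frac12\}$, a set with nonempty interior; since a bilinear function vanishing on an open set vanishes identically, finite-dimensionality gives $\int_{(\CC+\bx)\cap\{|x|>\frac12\}}|U_0|^2\,d\xi\ge c\,|U_0(\bx)|^2$, and the cell estimate survives with a smaller fixed constant. With either patch your argument is complete and coincides in substance with the paper's proof.
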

\begin{proof}It follows from \eqref{interp} and the logarithmic Hardy inequality for functions in $\R^2$, vanishing near the origin, that for $U=\GJ u$, we have
\begin{equation}\label{HardyFin1}
     \|u\|^2_{\Sob0}\ge C\int |\nabla U|^2dx\ge C'\int |U(x)|^2|x|^{-2}(\log^2 (|x|+2))^{-1}dx.
\end{equation}
Taking into account that $U(0,0)=u(0,0)=0$ and that $U$ is piecewise bi-linear (with exception of 4 central squares), we can estimate from below the last integral on in \eqref{HardyFin1} by the sum of the values of the integrand in the lattice points, which produces the Hardy type inequality in question.
\end{proof}

\subsection{Main result for the $\Z^2$-case.}\label{carrover}
Given a discrete potential $V=\VV0\ge0$, we associate with it the piecewise-constant potential $\VV2=\CI(\VV0)$, assigning at each point $(x_1,x_2)\in\R^2$ the value
$\VV2(x_1,x_2)=\VV0([x_1], [x_2])$  ($[.]$, as usual, denotes the entire part of the number in brackets).

 \begin{prop}\label{PropCarry}Let $\VV0 \ge 0$ be a discrete potential on $\Z^2$ and $\VV2=\CI(\VV0)$. Then
\begin{equation}\label{Alt4}
    N_-(\DD0-\VV0)\le N_-(\DD2-\g\VV2),
\end{equation}
with some constant $\g>0$, not depending on $\VV0.$
\end{prop}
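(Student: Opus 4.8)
The plan is to deduce the operator inequality \eqref{Alt4} from the variational (Glazman) description of $N_-$, by comparing the quadratic forms of the two operators on matching test subspaces through the bi-linear interpolation $\GJ_0$ already constructed. Recall that $N_-(\DD0-\VV0)$ equals the supremum of the dimensions of subspaces $\CL$ of finitely supported functions on $\Z^2$ on which the form
\[
q_0[u]=\|u\|^2_{\Sob0}-\sum_{\bx\in\Z^2}\VV0(\bx)|u(\bx)|^2
\]
is negative; here I use that the Dirichlet form of $-\DD0$ is exactly the semi-norm \eqref{Alt1}, and that finitely supported functions form a form core. I would fix any $N<N_-(\DD0-\VV0)$ and pick such an $N$-dimensional subspace $\CL$. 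Since $U_0=\GJ_0 u$ reproduces $u$ at the lattice points, the map $\GJ_0$ is injective, so $\GJ_0\CL$ is an $N$-dimensional subspace of $H^1(\R^2)$; it then suffices to show that the $\R^2$-form $q_2[U]=\int_{\R^2}|\nabla U|^2\,dx-\g\int_{\R^2}\VV2\,|U|^2\,dx$ is negative on $\GJ_0\CL\setminus\{0\}$ for a suitable $\g$, since this gives $N_-(\DD2-\g\VV2)\ge N$, and letting $N\uparrow N_-(\DD0-\VV0)$ finishes the proof.

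Two estimates drive the comparison. The upper bound on the kinetic term is exactly Proposition \ref{prop.interp}: $\int_{\R^2}|\nabla U_0|^2\,dx\le C\|u\|^2_{\Sob0}$. For the potential term I would argue cell by cell. On the cell $\CC+\bx$ the function $\VV2$ is the constant $\VV0(\bx)$, while $U_0$ is the bi-linear function with the four prescribed corner values; the map sending these corner values to $\int_{\CC}|U_0|^2$ is a positive definite quadratic form on $\CL(\CC)$, one and the same on every cell by translation invariance. Consequently there is a universal $c>0$ with $\int_{\CC+\bx}|U_0|^2\,dx\ge c\,|u(\bx)|^2$, retaining only the lower-left corner. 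Summing over $\bx$ and using $\VV0\ge0$ gives $\int_{\R^2}\VV2|U_0|^2\,dx\ge c\sum_{\bx}\VV0(\bx)|u(\bx)|^2$. Combining the two estimates on $\CL$, where $\|u\|^2_{\Sob0}<\sum_{\bx}\VV0(\bx)|u(\bx)|^2$, yields
\[
\int_{\R^2}|\nabla U_0|^2\,dx\le C\|u\|^2_{\Sob0}<C\sum_{\bx\in\Z^2}\VV0(\bx)|u(\bx)|^2\le \frac{C}{c}\int_{\R^2}\VV2|U_0|^2\,dx ,
\]
so that $q_2<0$ on $\GJ_0\CL\setminus\{0\}$ with the admissible choice $\g=C/c$, independent of $\VV0$.

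I expect the delicate points to be two, both minor. First, the potential lower bound must come with a constant $c$ uniform over all cells and all potentials; this is precisely why I reduce it to the positive definiteness of a single fixed mass form on the unit cell and keep only the diagonal corner contribution, discarding the cross terms, so that the hypothesis $\VV0\ge0$ is used decisively. Second, the variational argument should be run on finitely supported functions, so that Proposition \ref{prop.interp} (and hence $U_0\in H^1(\R^2)$) applies; realizing a near-maximal negative subspace inside the finitely supported functions is a routine truncation and density step, since these form a core and strict negativity persists under small perturbations of a finite-dimensional subspace. Note that no cut-off near the origin and no condition $u(0,0)=0$ is needed here, as $\GJ_0$ already lands in $H^1(\R^2)$; the cut-off modification $\GJ$ enters only for the Hardy-type statement in Proposition \ref{DiscrHardy}.
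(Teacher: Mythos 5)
Your proposal is correct and follows essentially the same route as the paper: the variational (Glazman) principle, the bi-linear interpolation of a negative subspace into $H^1(\R^2)$, the kinetic bound from Proposition \ref{prop.interp}, and a uniform lower bound for the potential term, yielding $\g=C/c$ independent of $\VV0$. The only differences are expository: you work with $\GJ_0$ rather than the cut-off version $\GJ$ (which is indeed unnecessary here, as the paper itself implicitly acknowledges by noting that Proposition \ref{DiscrHardy} is not used), and you spell out, via positive definiteness of the mass form on the unit cell, the estimate $\bb_{\VV2}[U]\ge C_2\,\bb_{\VV0}[u]$ that the paper asserts with ``one readily sees''.
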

\begin{proof} Let first $N_-(\DD0-\VV0)=\bm<\infty$. This means that there exists a subspace $\CL\subset \ell^2(\Z^2)$, having dimension $\bm$ such that $\|u\|^2_{\!\!\Sob0}< \bb[u]$ for all $u\in \CL$, $u\ne 0$. By compactness, one may suppose that all functions in $\CL$ have support in a common compact set $\GK\subset \Z^2$.

Consider the set  $\boldsymbol{\CL}=\GJ \CL$ consisting of interpolants $U=\GJ u,\ u\in \CL$, where $\GJ$ is the interpolation operator, described in Subsection \ref{Sob0}. This is a space of functions on $\R^2$ contained in $\Sob2(\R^2)$, having the same dimension $\bm$.
One readily sees that $\|\nabla U\|_{L^2}^2 \le C_1 \|u\|^2_{\Sob0}$ and $\bb_{\VV2}[U]\ge C_2 \bb_{\VV0}[u]$. Therefore, for $U\in \boldsymbol{\CL},\  U\ne0$ we have
\begin{equation}\label{Alt5}
    \|\nabla U\|_{L^2}^2 -\g\a \int V|U|^2 dx< C_1(\|u\|^2_{\Sob0})- \bb[u])<0, \qquad \g= {C_1}C_2^{-1}.
\end{equation}
The last inequality means that $N_-(\DD2-\g \VV2)\ge \bm$, which proves \eqref{Alt4} for $\bm<\infty$.  In the case $\bm=\infty$, we can repeat the above reasoning for any finite-dimensional subspace $\CL$ and obtain that $N_-(\DD2-\g\VV2)$ exceeds any given natural number, in other words, that  $N_-(\DD2-\g\VV2)=\infty$.
\end{proof}
Note that we did not use Proposition \ref{DiscrHardy} in the proof.

\section{The  weighted estimate for $\Z^2$}\label{weiest}
As it follows from Proposition \ref{PropCarry}, any eigenvalue estimate for the usual \sh operator has its counterpart for the discrete one. The weak side of this approach is that
it gives the estimate for $\Z^2$-case in terms of the associated potential $\VV2=\CI\VV0$, rather than in terms of the original discrete potential $\VV0$,  and some features, in particular related to the circle symmetry, are hopelessly lost. In this connection, a recent result by Molchanov and Vainberg (see Theorem 6.1 in \cite{mv12}) deserves a special attention, since it is free from this defect.
\begin{thm}\label{MVain}
For any discrete potential $\VV0\ge0$ on $\Z^2$, the estimate holds:
\begin{equation}\label{MVest}
    N_-(\HH0_{\VV0}) \le 1+C\sum_{\bx\in\Z^2}\VV0(\bx)\log(2+|\bx|).
\end{equation}
\end{thm}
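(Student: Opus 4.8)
The plan is to derive \eqref{MVest} from Shargorodsky's estimate (\thmref{Sharg}) through the transfer machinery of Section \ref{Sect2to0}, arranged so that the passage to $\R^2$ produces an \emph{exterior} problem and the origin contributes only the additive constant. First I would peel off the single guaranteed eigenvalue by restricting the variational problem for $N_-(\HH0_{\VV0})$ to the subspace $\{u:u(0,0)=0\}$; this lowers the count by at most one, and on this subspace the free form $\|u\|^2_{\Sob0}$ is subcritical by the discrete Hardy inequality of Proposition \ref{DiscrHardy}, so no negative direction is lost for free. On $\{u:u(0,0)=0\}$ I would then repeat the argument of Proposition \ref{PropCarry}: the interpolant $U=\GJ u$ satisfies $\|\nabla U\|_{L^2}^2\le C\|u\|^2_{\Sob0}$ by \eqref{interp1} and $\bb_{\VV2}[U]\ge c\,\bb_{\VV0}[u]$ with $\VV2=\CI(\VV0)$, so the negative subspace embeds into that of $-\DD2-\g\VV2$. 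The decisive feature is that $U=\psi\GJ_0u$ vanishes near the origin, so it suffices to bound the number of negative directions subject to a Dirichlet condition on a fixed disc; this is exactly what tames the growth of the effective potential \eqref{effpot} as $r\to0$, which would otherwise make the right-hand side infinite.

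Next I would apply the radial/non-radial splitting behind \thmref{Sharg} to $\g\VV2$ on this exterior region. Because of the Dirichlet condition the operator $\BM_G$ reduces to the single half-line $t>0$, where $G(t)=e^{2t}(\VV2)_{\rad}(e^t)$ is the integrable exterior branch of \eqref{effpot}. Since the weak-$\ell_1$ quasi-norm is dominated by the $\ell_1$ norm, it is enough to estimate $\sum_j\wh{\z_j}(G)$; the change of variable $r=e^t$ converts this sum, up to constants, into $\int_{|x|>1}(1+\log|x|)\VV2\,dx$, and integrating the piecewise-constant integrand cell by cell gives $\sum_{\bx}\VV0(\bx)\log(2+|\bx|)$. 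This is the radial half of \eqref{MVest}.

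The hard part will be the non-radial term $\|(\VV2)_{\nrad}\|_{L_1(\R_+,\GB(\Sq))}$. On the circle of radius $r\approx|\bx|$ the potential $\VV2$ is piecewise constant, equal to $\VV0(\bx)$ on an arc of angular width $\approx1/r$, and the role of the Orlicz space $\GB(\Sq)=L\log L(\Sq)$ is precisely that the $\GB$-norm of a function concentrated on an arc of measure $\sim1/r$ exceeds its $L^1$-norm by a factor $\approx\log r\approx\log|\bx|$. Performing this Orlicz computation uniformly in $r$, controlling the overlap of neighbouring cells crossed by one circle, and checking that subtracting $(\VV2)_{\rad}$ does not spoil the bound, is the step I expect to be most delicate. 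Once it yields $\int_{\R_+}\|(\VV2)_{\nrad}(r,\cdot)\|_{\GB(\Sq)}\,r\,dr\le C\sum_{\bx}\VV0(\bx)\log(2+|\bx|)$, combining this with the radial estimate and the eigenvalue split off at the start gives \eqref{MVest}.
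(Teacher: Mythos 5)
Your route is viable, but it is genuinely different from the paper's. The paper does not use Shargorodsky's \thmref{Sharg} at all: after the same interpolation transfer (Propositions \ref{prop.interp}, \ref{PropCarry}), it invokes the older estimate \eqref{MZ94} from \cite{S94}, whose nontrivial term is the $\ell^1$-norm of the averaged Orlicz norms $\m_k(\VV2)=\|\VV2\|_{\GB,\Om_k}^{(av)}$ over dyadic annuli; the key step \eqref{z3} is proved there by an extremal argument showing that the worst piecewise-constant potential of unit mass on $E_k$ is concentrated on a single square, whence $\|\VV2\|_{\GB,E_k}^{(av)}\asymp\GA^{-1}(|E_k|)\asymp k+1$. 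You instead run the radial/non-radial splitting of \thmref{Sharg}, so your logarithm comes from one-dimensional geometry: the $\GB(\Sq)$-norm of the indicator of an arc of angular measure $\sim 1/r$ is $\asymp r^{-1}\log r$. The step you flag as delicate does go through: writing $\VV2(r,\cdot)=\sum_\bx\VV0(\bx)\chi_{A_\bx(r)}$, the triangle inequality gives $\|\VV2(r,\cdot)\|_{\GB(\Sq)}\lesssim (1+r)^{-1}\log(2+r)\sum'\VV0(\bx)$, the sum running over the cells met by the circle of radius $r$; since a given cell $Q_\bx$ is met only for $r$ in an interval of length at most $\sqrt2$ around $|\bx|$, integrating against $r\,dr$ and interchanging sum and integral yields exactly $C\sum_\bx\VV0(\bx)\log(2+|\bx|)$, and subtracting the radial part costs only an extra $C\int\VV2\,dx$ by another application of the triangle inequality. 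So both proofs exploit the same phenomenon --- a unit cell seen from distance $R$ has small relative measure, and $L\log L$ converts smallness of support into a factor $\log R$ --- realized on two-dimensional annuli in the paper and on circles in your argument.

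Two inaccuracies, neither fatal. First, your claim that without a Dirichlet condition near the origin the right-hand side of \eqref{est2} ``would be infinite'' is wrong for $\VV2=\CI(\VV0)$: since $\VV2$ is bounded near the origin, the effective potential $G(t)=e^{2t}(\VV2)_{\rad}(e^t)$ decays like $e^{-2|t|}$ as $t\to-\infty$, so the interior branch contributes terms $\wh{\z_j}$ decaying super-exponentially, and likewise $\int_0^1\|(\VV2)_{\nrad}(r,\cdot)\|_{\GB(\Sq)}\,r\,dr\lesssim\sum_{|\bx|\le2}\VV0(\bx)$. Hence your entire first paragraph (peeling off one eigenvalue, passing to an exterior problem) can simply be deleted: apply Proposition \ref{PropCarry} and then \thmref{Sharg} to $\g\VV2$ on all of $\R^2$. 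Second, if you do keep that reduction, note that ``$\BM_G$ reduces to the single half-line'' is a statement about the inside of Shargorodsky's proof, not about \thmref{Sharg} as stated; the black-box implementation is to observe that test functions vanishing on $|x|<\rho$ see only the truncated potential $\VV2\chi_{\{|x|\ge\rho\}}$, and to apply the full-plane theorem to that truncation.
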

Due to Theorem \ref{noest} there can exist no corresponding $\R^2$-estimate, having a similar form. The aim of this section is to show that, nevertheless, \eqref{MVest} can be derived by the interpolation procedure from an existing $\R^2$-estimate, namely, from the result of \cite{S94}.

To formulate this latter result, we use the pair \eqref{orlB} and \eqref{orlA} of mutually complementary $\CN$-functions. For any measurable set $E\subset\R^2$ of finite Lebesgue measure the Orlicz space $L_{\GB}(E)$ is defined as the space of measurable functions $v$ on $E$, such that $\int_E\GB(|v(x)|) dx<\infty$. Since the $\CN$-function $\GB(t)$ meets the so-called $\D_2$-condition, see \cite{KR,RR},
such functions form a Banach space, and one of the equivalent norms in it is the \emph{averaged norm}  (introduced in \cite{S94}):
\begin{equation}\label{OrlAv}
    \|v\|_{\GB, E}^{(av)}=\sup\left\{\left|\int_E vg dx\right|:\int_{E}\GA(|g(x)|)dx\le |E| \right\}.
\end{equation}

Now, to formulate the estimate from \cite{S94}, we consider the partition of $\R^2$:
\begin{equation}\label{partitions}
    \Om_0=\{x:|x|\le1\}, \ \Om_k=\{x: 2^{k-1}\le|x|\le 2^k\}.
\end{equation}
With a given potential $\VV2\ge0$, we associate the  number sequence $\boldsymbol{\mu}(\VV2)=\{\m_k(\VV2)\}$, where \begin{equation}\label{m(VV2)}
 \m_k(\VV2)=\|\VV2\|_{\CB,\Om_k}^{(av)}.
 \end{equation}

 In these notations, by Theorem 3 in \cite{S94}, (more precisely, by its simplified version, see (32) there)
 \begin{equation}\label{MZ94}
    N_-(\DD2- \a \VV2)\le 1 +C \|\boldsymbol{\m}(\VV2)\|_{\ell^1}+\int \VV2(x)|\log |x|| dx.
 \end{equation}

 Now, using Proposition \ref{prop.interp}, we derive the estimate \eqref{MVest} from \eqref{MZ94}. To this end, for a given $\VV0$, we consider the piecewise constant potential $\VV2 =\CI(\VV0)$. The estimate  \eqref{MVest} follows from  \eqref{MZ94}  immediately as soon as we prove that

 \begin{equation}\label{z2}
    \|\boldsymbol{\m}(\VV2)\|_{\ell^1}\le C \sum_{\bx\in\Z^2}\VV0(\bx)\log(2+|\bx|).
 \end{equation}
 For $k$ fixed, we consider the set $E_k$ formed by all unit squares in the lattice, intersecting $\Om_k.$ Such sets form a covering of $\R^2$ with multiplicity 2, moreover, $|E_k|\asymp|\Om_k|$.
 So, since $\log(|\bx|+2)\asymp k+1$ for $\bx\in E_k$, to prove \eqref{z2}, it suffices to establish the inequality
 \begin{equation}\label{z3}
    \|\VV2\|_{\GB,E_k}^{(av)}\le C(k+1) \sum_{\bx\in E_k}\VV0(\bx).
 \end{equation} Further on, in this proof, all summations are performed over $\bx\in E_k\cap \Z^2$. For any unit square $Q$ (with vertices in $\Z^2$) and a nonnegative function $g$ we denote by $g_Q(x)$ the constant function on $Q$ which equals the mean value of $g$ over $Q$, so that $\int_Q g\VV2 dx=\int_Q g_Q\VV2dx$. By Jensen's inequality,
 \begin{equation*}
    \int_{Q}\GA(g(x))dx\ge\GA\left(\int_Q g(x)dx\right)=\int_{Q}\GA(g_Q(x))dx,
 \end{equation*}
 therefore, to obtain an upper estimate for the averaged  norm of $\VV2$, it is sufficient to maximize over piecewise constant functions $g$:
 \begin{equation}\label{z4}
    \|\VV2\|_{\GB,E_k}^{(av)}\le \sup\{\sum\VV0(\bx) f(\bx): \sum\GA(g(\bx))=|E_k|\}.
 \end{equation}
 By linearity, it is sufficient to estimate  the quantity in \eqref{z4} under the normalization $\sum \VV0(\bx)=1$, so we aim for estimating the quantity
 \begin{equation}\label{z5}
    \sup\{\sum \VV0(\bx) g(\bx): \sum\GA(g(\bx))=|E_k|\asymp C2^k\}
 \end{equation}
We can, instead, consider the expression $S(\VV0,g)=\sum \VV0(\bx) g(\bx)$
and maximize it over all collections $\{\VV0(\bx)\ge0\}, \{g(\bx)\ge0\}, $ under the conditions $\sum \VV0(\bx)=1, \sum \GA(g(\bx))=E_k.$ By compactness and continuity, a point of maximum must exist.
We state that  at the point of maximum, only one of $\VV0(\bx)$ is not zero. In fact, if, say, $\VV0(\bx_1),\VV0(\bx_2)\ne 0$
and $g(\bx_1)$ (or $g(\bx_2)$) is $0$, then we can change $\VV0(\bx_1)$ to 0 and add this $\VV0(\bx_1)$ to such $\VV0(\bx)$, for which $g(\bx)>0$, thus keeping $\sum \VV0(\bx)=1$ and increasing $S(\VV0,g)$. If both $g(\bx_1),g(\bx_2) $ are positive and, say,
$g(\bx_1)\ge g(\bx_2)$, we change $\VV0(\bx_1)\mapsto \VV0(\bx_1)+\VV0(\bx_2)$, $\VV0(\bx_2)\mapsto 0$ and thus increase $S(\VV0,g)$, keeping $\sum \VV0(\bx)=1$.

So, the function $\VV2$, maximizing the norm $\|\VV2\|_{\CB, E_k}^{av}$ among those piecewise constant nonnegative functions on $E_k$, for which $\int_{E_k}\VV2 dx=\sum_{\bx\in E_k\cap \Z^2} \VV0(\bx)=1$, is the function  supported just on one square and having value 1 there. For such function, the averaged Orlicz norm over the domain $E_k$ equals
\begin{equation}\label{z6}
    \|\VV2\|_{\GB,E_k}^{av}=\VV2 {\GA}^{-1}(|E_k|)\asymp \log |E_k|\asymp k+1.
\end{equation}
Summing up such inequalities over all $k$,  we arrive at \eqref{z3} and this finishes the (alternative) proof of \eqref{MVest}.

\begin{rem}\label{omaloe}
Along with the estimate \eqref{MVest}, one has
\begin{equation}\label{omal}
    N_-(\DD0-\a\VV0)=o(\a), \qquad\a\to\w.
\end{equation}
\end{rem}
Indeed, this is certainly true for the finitely supported potentials. Since such potentials
form a dense set in the cone of non-negative elements in the weighted space $\ell^1(\Z^2,\log(2+|x|)$, the result extends, by continuity, to all such weights.

The details can be easily restored by the analogy with \cite{RS10}, see the  proof of Theorem 2.3 there.

\section{The chessboard mesh}\label{ael}
\subsection{Basic definitions}\label{ael1}
First of all, we present the detailed description of the metric graph $\G_{\ch}$ that we call the chessboard mesh, see Section \ref{Setting}. Its set of vertices is $\Z^2$ (considered as naturally imbedded into $\R^2$) and the set $\CE$ of edges consists of the intervals $\be_\n$ of length one, connecting the neighboring vertices. Here $\n$ in the index stands for the pair $\n=(\bx,\by)=(\by,\bx)$ of the vertices which are the endpoints of the edge. The measure $dz$ on $\G_{\ch}$ is induced by the Lebesgue measure on the edges and our main Hilbert space is $L^2(\G_{\ch}), dz)=\oplus_{\be\in\CE} L^2(\be,dz)$.  A function $u$ on $\G_{\ch}$ belongs to the Sobolev space $H^1(\G_{\ch})$ if it is continuous on $\G_{\ch}$, lies in $H^1(\be)$ for each $\be\in\CE$, and
\begin{equation}\label{sobE}
    \|u\|_{H^1(\G_{\ch})}^2=\int_{\G_{\ch}}(|u'(z)|^2+|u(z)|^2)dz=
    \sum_{\be\in\CE}\int_{\be}(|u'(z)|^2+|u(z)|^2)dz<\infty.
\end{equation}

The (minus) Laplacian on $\G_{\ch}$ is defined as the self-adjoint
operator in $L^2(\G_{\ch})$, associated with the quadratic form $\ba_{\G_{\ch}}[u]:=\int_{\G_{\ch}}|u'(z)|^2dz$ considered on the form-domain $H^1(\G_{\ch})$. Its operator domain, $\CD(\D)$, can be explicitly described as follows: $u\in\CD(\D)$ if and only if $u$ is continuous on $\G_{\ch}$, $u''(z)\in L^2(\be)$ on each edge $\be\in\CE$, the Kirchhoff matching condition is fulfilled at each vertex $\bx\in\Z^2$ (that is, the sum of outgoing derivatives of $u$ at the point $\bx$ equals zero), and, finally,
\[\sum_{\be\in\CE}\int_{\be} (|u''|^2+|u|^2)dz<\infty.\]
On this domain the Laplacian acts as $\D u=u''$ on each edge; see \cite{Kuch} for more detail on the Laplacian on metric trees.

The spectrum of $-\D$ coincides with $[0,\infty)$. It consists of the a.c. component, filling the same interval, and the embedded eigenvalues $\l_n=\pi^2n^2$, each of infinite multiplicity (see \cite{Ku2}).

The \sh operator $-\D-V$ on $\G_{\ch}$ is standardly defined as the form-sum, the potential $V\ge0$ being a measurable function on $\G_{\ch}$. The assumptions we impose on $V$
guarantee that the quadratic form $\ba_{\G_{\ch}}[u]-\int_{\G_{\ch}} V|u|^2dz$ is bounded from below and closed on $H^1(\G_{\ch})$.

For the spectral analysis of the \sh operator on $\G_{\ch}$ it is convenient to consider two pre-Hilbert spaces that are linear subspaces in
the space $H^1_{\comp}(\G_{\ch})$ of all compactly supported functions in $H^1(\G_{\ch})$. One of them,
$H^1_{\comp,\pl}(\G_{\ch})$, is formed by  functions linear on each edge $\be\in\CE$;
the subscript $\pl$ stands for `piecewise-linear'. Any function
$\varf\in H^1_{\comp,\pl}(\G_{\ch})$ is determined by its values $\varf(\bx)$ at the
vertices. Given a  lattice function $u=\{u(\bx)\},\ \bx\in\Z^2$ with finite support , we denote by
$Ju $ the piecewise-linear extension of $u$ to $\G_{\ch}$ i e., the (unique) function in $H^1_{\comp,\pl}(\G_{\ch})$, such that
$(Ju)(\bx)=u(\bx)$ for all $ \bx\in\Z^2$. The mapping
$J$ defines an isometry between the pre-Hilbert spaces $H^1_{\comp,\pl}(\G_{\ch})$
equipped with the metric $\ba_{\G_{\ch}}$, and $\CH_{\fin}(\Z^2)$ of finitely supported lattice functions, equipped with the
metric \eqref{Alt1}. By means of  this isometry
we identify   these pre-Hilbert spaces.

Another subspace is $H^1_{\comp,\CD}$  ($\CD$ hints for \emph{Dirichlet}) consists of all functions
$\varf\in H^1_{\comp}(\G_{\ch})$, such that $\varf(\bx)=0$ for all $\bx\in \Z^2$.
It is clear that
\begin{equation}\label{prehilbert}
   H^1_{\comp}=H^1_{\comp,\pl}\oplus H^1_{\comp,\CD}
\end{equation}
(the orthogonal decomposition in the metric  $\ba_{\G_{\ch}}$). We will
denote by $\varf_{\pl}$ and $\varf_\CD$ the components of a given element
$\varf\in H^1_{\comp}(\G_{\ch})$ with respect to this decomposition.

\begin{prop} \label{Hardy1Prop}For any function $\varf\in H^1_{\comp} $, such that $\varf(0,0)=0$,  the following  Hardy type inequality holds:
\begin{equation}\label{Hardy1}
    \ba_{\G_{\ch}}[\varf]\ge C\int_{\G_{\ch}}|\varf(z)|^2(|z|^2(\log^2(|z|+2)))^{-1}dz.
\end{equation}
\end{prop}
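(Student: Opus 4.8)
The plan is to use the orthogonal decomposition \eqref{prehilbert}, writing $\varf=\varf_{\pl}+\varf_\CD$, which gives $\ba_{\G_{\ch}}[\varf]=\ba_{\G_{\ch}}[\varf_{\pl}]+\ba_{\G_{\ch}}[\varf_\CD]$. Denoting the weight by $w(z)=\left(|z|^2\log^2(|z|+2)\right)^{-1}$ and using $|\varf|^2\le 2|\varf_{\pl}|^2+2|\varf_\CD|^2$, it suffices to establish the Hardy inequality separately for each component, i.e. to bound $\int_{\G_{\ch}}|\varf_{\pl}|^2 w\,dz$ by $C\ba_{\G_{\ch}}[\varf_{\pl}]$ and $\int_{\G_{\ch}}|\varf_\CD|^2 w\,dz$ by $C\ba_{\G_{\ch}}[\varf_\CD]$. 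Note that $\varf(0,0)=0$ forces $\varf_{\pl}(0,0)=0$, while $\varf_\CD$ vanishes at every vertex by definition.

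For the piecewise-linear part I would pass to the lattice via the isometry $J$ of Subsection \ref{ael1}: setting $u=\{\varf(\bx)\}$ one has $\ba_{\G_{\ch}}[\varf_{\pl}]=\|u\|^2_{\Sob0}$ and $u(0,0)=0$, so Proposition \ref{DiscrHardy} applies and bounds $\|u\|^2_{\Sob0}$ from below by $C\sum_{\bx\ne0}|\varf(\bx)|^2|\bx|^{-2}(\log(|\bx|+2))^{-2}$. It then remains to check the reverse comparison $\int_{\G_{\ch}}|\varf_{\pl}|^2 w\,dz\le C\sum_{\bx\ne0}|\varf(\bx)|^2|\bx|^{-2}(\log(|\bx|+2))^{-2}$. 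On an edge $\be$ joining neighbouring vertices $\bx,\by$ the function $\varf_{\pl}$ is linear, so $\int_{\be}|\varf_{\pl}|^2\,dz\asymp|\varf(\bx)|^2+|\varf(\by)|^2$; for edges not meeting the origin one has $|z|\asymp|\bx|$ there, hence $w$ is comparable to $|\bx|^{-2}(\log(|\bx|+2))^{-2}$, and since each vertex lies on only four edges, summation gives the claim. On the four edges emanating from the origin $\varf_{\pl}$ vanishes linearly at the endpoint $z=0$, so the factor $|z|^2$ coming from $|\varf_{\pl}|^2$ cancels the singular $|z|^{-2}$ of $w$ and the contribution stays finite; this is exactly where the hypothesis $\varf(0,0)=0$ is used.

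For the Dirichlet part I would argue edge by edge, parametrising each edge by arclength $s\in[0,1]$. On the four edges issuing from the origin $|z|=s$, so $w\le C s^{-2}$ on $[0,1]$ because $\log(|z|+2)$ is bounded below; since $\varf_\CD$ vanishes at $s=0$, the one-dimensional Hardy inequality $\int_0^1|\varf_\CD|^2 s^{-2}\,ds\le 4\int_0^1|\varf_\CD'|^2\,ds$ controls these terms by $C\int_{\be}|\varf_\CD'|^2\,dz$. On every other edge $|z|$ is bounded away from $0$ (indeed $|z|\ge1$ there) and $w\le C_0$ uniformly, so the scalar Poincar\'e inequality for $H^1_0(0,1)$ gives $\int_{\be}|\varf_\CD|^2 w\,dz\le C_0\int_{\be}|\varf_\CD|^2\,dz\le C\int_{\be}|\varf_\CD'|^2\,dz$. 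Summing over all edges yields $\int_{\G_{\ch}}|\varf_\CD|^2 w\,dz\le C\ba_{\G_{\ch}}[\varf_\CD]$, and combining the two bounds completes the proof.

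The main obstacle is the singularity of the weight $w$ at the origin, where $w\sim|z|^{-2}$. It cannot be treated for $\varf$ directly, since $\varf$ need not vanish at the endpoints $(\pm1,0),(0,\pm1)$ of the central edges. The role of the decomposition \eqref{prehilbert} is precisely to split this difficulty into two tractable pieces: for $\varf_{\pl}$ the singularity is absorbed by the linear vanishing at the origin vertex, while for $\varf_\CD$ it is absorbed by the endpoint-vanishing through the scalar Hardy inequality. The long-range (large $|z|$) decay, by contrast, is governed entirely by the discrete Hardy inequality of Proposition \ref{DiscrHardy} acting on the piecewise-linear component.
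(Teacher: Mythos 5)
Your proof is correct and takes essentially the same route as the paper's: the orthogonal decomposition \eqref{prehilbert}, the isometry $J$ combined with the discrete Hardy inequality of Proposition \ref{DiscrHardy} for $\varf_{\pl}$, and the one-dimensional Hardy/Poincar\'e inequalities applied edge by edge for $\varf_\CD$. The only difference is that you spell out the comparison between the continuous weighted integral and the discrete weighted sum (including the edges incident to the origin), a step the paper treats as obvious.
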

\begin{proof}It is sufficient to prove the inequality for $\varf_{\pl}$ and $\varf_\CD$ separately. For $\varf_{\pl}$, by the isometry $J,$ it obviously follows from the inequality \eqref{HardyFin}, and for $\varf_\CD$  - from the usual Hardy inequality applied to each of intervals $\be\in\CE.$
\end{proof}

Now we can consider the space $\Sob1$ of functions on $\G_{\ch}$, which is the closure of $H^1_{\comp}$ in the metric $\ba_\G$. By Proposition \ref{Hardy1Prop}, this is a space of functions, embedded in $L^2(\G_{\ch})$ with weight $|z|^2(\log^2(|z|+2))^{-1}$. The closures of the terms in \eqref{prehilbert} are Hilbert spaces $\Sob1_{\!\pl}$ and $\Sob1_{\!\CD}$ respectively, giving an orthogonal decomposition of $\Sob1,$
\begin{equation}\label{hilbert}
  \Sob1=\Sob1_{\!\pl}\oplus \Sob1_{\!\CD}.
\end{equation}

\subsection{Spectral estimates on $\G_{\ch}$.}\label{SectMetricEst}
The passage from eigenvalue estimates for $\HH0$ to the ones for $\HH1$ was elaborated in detail in \cite{RS10} for general graphs. Here we explain, how it works in our particular case.  Let a potential $\VV1$ on $\G_{\ch}$ be given. For each edge $\be$ in $\G_{\ch}$, we consider the Sturm-Liouville operator in $L^2(\be)$, $\HH1=\BH_{\be,\CD}(\VV1)=-\frac{d^2}{dz^2}-\VV1(z)$, with Dirichlet  boundary condition at the endpoints of $\be$. The direct sum of these operators is denoted $\HH\CD$; this operator corresponds to the second term in the decomposition \eqref{hilbert}. Another operator,  stemming from the first term in the  decomposition \eqref{hilbert}, is the operator $\HH0$ on the lattice $\Z^2$, with the effective potential
\begin{equation}\label{effective}
    \VV0(\bx)=\sum_{\be\ni\bx}\y(\be),
\end{equation}
where $\y(\be,\VV1)=\int_{\be}\VV1(z)dz$. We denote by $\pmb{\y}(\VV1)$ the sequence $\{\y(\be,\VV1)\}$.
By the general result of \cite{RS10},  Sect.4.3, we have
\begin{equation}\label{decoupling}
    N_-(\HH1_{\frac12\VV1})\le N_-(\HH\CD)+N_-(\HH0).
\end{equation}
\begin{thm}Denote by $\rho(\be)$ the distance from the origin to the nearest to the origin point of $\be$.  Suppose that the sum
\begin{equation}\label{BL}
\BL(\VV1)=\sum_{\be}\y(\be, \VV1)\log(2+\rho(\be))
\end{equation}
is finite. Then $N_-(\HH1)\le 1+C\BL(\VV1)$ with a constant $C$ not depending on $V$.
\end{thm}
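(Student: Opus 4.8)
The plan is to bound $N_-(\HH1)=N_-(\HH1_{\VV1})$ by combining the decoupling inequality \eqref{decoupling} with the Molchanov--Vainberg estimate \eqref{MVest} for the lattice part and a one-dimensional Birman--Schwinger bound for the Dirichlet part. Since \eqref{decoupling} is stated for $\HH1_{\frac12\VV1}$, I would first apply it with $2\VV1$ in place of $\VV1$; this bounds $N_-(\HH1_{\VV1})$ by $N_-(\HH\CD)+N_-(\HH0)$ with $\HH\CD,\HH0$ built from $2\VV1$, and since $\BL$ and all the bounds below are homogeneous of degree one in the potential, the factor $2$ is absorbed into the constants. Thus it suffices to prove, for the operators built from $\VV1$ as in \secref{SectMetricEst},
\[
N_-(\HH0)\le 1+C\,\BL(\VV1)\qquad\text{and}\qquad N_-(\HH\CD)\le C\,\BL(\VV1).
\]

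For the lattice term I would feed the effective potential $\VV0(\bx)=\sum_{\be\ni\bx}\y(\be)$ into Theorem~\ref{MVain} and change the order of summation:
\[
\sum_{\bx\in\Z^2}\VV0(\bx)\log(2+|\bx|)=\sum_{\be}\y(\be)\sum_{\bx\in\partial\be}\log(2+|\bx|).
\]
Each edge has length one, so both endpoints lie within distance $\rho(\be)+1$ of the origin; hence $\log(2+|\bx|)\le C\log(2+\rho(\be))$ for $\bx\in\partial\be$, and the inner sum is at most $C\log(2+\rho(\be))$. This gives $N_-(\HH0)\le 1+C\,\BL(\VV1)$, and the additive term $1$ is exactly the one appearing in the assertion (reflecting that $\HH0$ always has a negative eigenvalue).

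For the Dirichlet term I would use $N_-(\HH\CD)=\sum_{\be}N_-(\BH_{\be,\CD}(\VV1))$ and estimate each summand by the Birman--Schwinger principle at energy zero. Parametrizing $\be$ by arclength, the Green function of the free Dirichlet Laplacian $A_\be=-\frac{d^2}{dz^2}$ on the unit interval satisfies $G(z,z)=z(1-z)\le\tfrac14$, so the Birman--Schwinger operator $(\VV1)^{1/2}A_\be^{-1}(\VV1)^{1/2}$ has trace $\int_{\be}\VV1(z)\,G(z,z)\,dz\le\tfrac14\y(\be)$. As the number of eigenvalues of a nonnegative operator that are $\ge1$ never exceeds its trace, we obtain $N_-(\BH_{\be,\CD}(\VV1))\le\tfrac14\y(\be)$ (in particular edges with $\y(\be)\le1$ contribute nothing, consistently with the Friedrichs inequality on the interval, so this could equally be quoted as a standard one-dimensional Sturm--Liouville bound). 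Summing over edges and using $\log(2+\rho(\be))\ge\log2$ yields $N_-(\HH\CD)\le\tfrac14\sum_{\be}\y(\be)\le C\,\BL(\VV1)$.

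Adding the two estimates gives $N_-(\HH1)\le 1+C\,\BL(\VV1)$, as claimed. The only genuinely analytic ingredient is the per-edge Dirichlet bound; the one point to treat with a little care is the passage to the spectral edge $0$ in the Birman--Schwinger reduction, which is legitimate because the Dirichlet Laplacian on a finite edge has spectrum contained in $[\pi^2,\infty)$, so that $A_\be^{-1}$ is bounded and $K_E\to K_0$ monotonically as $E\uparrow 0$. Everything else is bookkeeping with the logarithmic weights and the elementary comparison $\rho(\be)\le|\bx|\le\rho(\be)+1$ on the endpoints of a unit edge.
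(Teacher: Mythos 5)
Your proof is correct and follows the paper's skeleton exactly: the decoupling inequality \eqref{decoupling}, Theorem~\ref{MVain} applied to the effective lattice potential $\VV0$, and a separate bound for the Dirichlet term. The one genuine difference is how the Dirichlet part is handled. The paper quotes Lemma~4.2, $2^{\circ}$ of \cite{RS10} with $q=1$, obtaining the weak-$\ell^1$ bound $N_-(\HH\CD)\le C\|\pmb{\y}(\VV1)\|_{\ell^1_w}$ (the paper's display writes $\HH0$ there, which is a typo for $\HH\CD$), and then dominates this quasi-norm by $\sum_\be\y(\be)\le C\BL(\VV1)$. You instead prove the per-edge bound $N_-(\BH_{\be,\CD}(\VV1))\le\tfrac14\y(\be)$ from scratch, via the trace of the Birman--Schwinger operator with the Dirichlet Green function $G(z,z)=z(1-z)$ on a unit edge; summing over edges gives an $\ell^1$ bound, formally weaker than the quoted weak-$\ell^1$ one, but entirely sufficient here since the paper too ends by passing to $\ell^1$. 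Your route buys self-containedness and an explicit constant; the citation buys the sharper weak-$\ell^1$ statement, which matters in other applications (potentials whose edge masses $\y(\be)$ decay slowly) but is not needed for this theorem. Two further points in your favour: you explicitly handle the factor $\tfrac12$ in \eqref{decoupling} by applying it to $2\VV1$ and invoking homogeneity of all the bounds, a step the paper's proof passes over in silence although it is needed for the statement as formulated; and your endpoint comparison $\rho(\be)\le|\bx|\le\rho(\be)+1$ is simply the expanded form of the paper's remark that $\sum_{\bx}\VV0(\bx)\log(2+|\bx|)$ is estimated from both sides by the sum in \eqref{BL}.
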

\begin{proof} We have to estimate both terms in \eqref{decoupling}. For the second one the result of Theorem \ref{MVain} applies, since the sum $\sum\VV0(\bx)\log(2+|\bx|)$ is estimated from both sides by the sum in \eqref{BL}. For the first term in \eqref{decoupling}, we can apply Lemma 4.2, $2^{\circ}$ in \cite{RS10} for the particular value $q=1$, which gives
\begin{equation*}
    N_-(\HH0)\le C\|\pmb{\y}(\VV1)\|_{\ell^1_w}
\end{equation*}
The last expression is dominated by the right-hand side of \eqref{BL}, and we are done.
\end{proof}
The sum in \eqref{BL} can be upper estimated  by the integral $\int_\G \VV1(z)\log(2+|z|) dz$, and this leads us to the final result.
\begin{thm}\label{intdest}Suppose that the integral $\BM(\VV1)=\int_\G \VV1(z)\log(2+|z|) dz$ converges. Then
\begin{equation*}N_-(\HH1)\le 1+C\BM(\VV1).
\end{equation*}
\end{thm}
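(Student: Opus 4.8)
The plan is to deduce Theorem~\ref{intdest} directly from the preceding theorem by showing that the sum $\BL(\VV1)$ is dominated by the integral $\BM(\VV1)$. Once the inequality $\BL(\VV1)\le\BM(\VV1)$ is in hand, the bound $N_-(\HH1)\le 1+C\BL(\VV1)$ established there immediately yields $N_-(\HH1)\le 1+C\BM(\VV1)$ with the same constant. This is precisely the domination already announced in the text just before the statement, so the whole argument reduces to verifying that one comparison of two positive quantities.

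The heart of the matter is a pointwise comparison carried out edge by edge. Recall that $\rho(\be)$ is the distance from the origin to the nearest point of $\be$, so that $|z|\ge\rho(\be)$ for every $z\in\be$. Since $t\mapsto\log(2+t)$ is monotone increasing, this gives $\log(2+|z|)\ge\log(2+\rho(\be))$ for all $z\in\be$. Multiplying by $\VV1(z)\ge0$ and integrating over the edge, I would obtain
\begin{equation*}
\y(\be,\VV1)\log(2+\rho(\be))=\log(2+\rho(\be))\int_{\be}\VV1(z)\,dz\le\int_{\be}\VV1(z)\log(2+|z|)\,dz.
\end{equation*}

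Summing these inequalities over all edges $\be\in\CE$, and using that the edges reassemble $\G_{\ch}$ (the vertices carrying no measure with respect to $dz$), I arrive at
\begin{equation*}
\BL(\VV1)=\sum_{\be}\y(\be,\VV1)\log(2+\rho(\be))\le\sum_{\be}\int_{\be}\VV1(z)\log(2+|z|)\,dz=\int_{\G_{\ch}}\VV1(z)\log(2+|z|)\,dz=\BM(\VV1),
\end{equation*}
which is exactly the required estimate. I do not expect a genuine obstacle here: the only points that need care are the orientation of the inequality --- one must use the lower bound $|z|\ge\rho(\be)$ coming from the definition of $\rho(\be)$, rather than any upper bound --- and the observation that the individual edge integrals genuinely recombine into the integral over $\G_{\ch}$; both are immediate from the definitions of $\y(\be,\VV1)$, $\rho(\be)$, and the measure $dz$.
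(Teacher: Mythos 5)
Your proof is correct and follows exactly the route the paper intends: the paper's entire justification is the one-line remark that the sum $\BL(\VV1)$ in \eqref{BL} is dominated by the integral $\BM(\VV1)$, which you verify edge by edge via $|z|\ge\rho(\be)$ and the monotonicity of $\log(2+t)$, and your observation that $\BL(\VV1)\le\BM(\VV1)<\infty$ also supplies the finiteness hypothesis needed to invoke the preceding theorem. Nothing is missing; you have simply filled in the details the paper leaves implicit.
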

We note here that Theorem \ref{MVain} and Theorem \ref{intdest} demonstrate that in local dimension 0 and 1 there cannot exist a forbidding result similar to Theorem \ref{noest}.

\end{document}